 \newtheorem{thm}{Theorem}[section]
 \newtheorem{cor}[thm]{Corollary}
 \newtheorem{lem}[thm]{Lemma}
 \newtheorem{prop}[thm]{Proposition}
 \theoremstyle{definition}
 \newtheorem{defn}[thm]{Definition}
 \newtheorem{ex}[thm]{Example}
 \theoremstyle{remark}
 \newtheorem{rem}[thm]{Remark}
 \numberwithin{equation}{section}
\begin{document}

%
%
%
%
%
%
%
%
%

\title[Compare and contrast between duals of fusion and discrete frames]
 {Compare and contrast between duals of fusion and discrete frames}


\author[E. Osgooei ]{Elnaz Osgooei}
\address{Department of sciences,
Urmia University of Technology, Urmia, Iran.} \email{
e.osgooei@uut.ac.ir; osgooei@yahoo.com}
\author[A. Arefijamaal]{Ali Akbar Arefijamaal}
\address{Department of Mathematics and Computer Sciences, Hakim Sabzevari University, Sabzevar, Iran.}
\email{arefijamaal@hsu.ac.ir; arefijamaal@gmail.com}

\subjclass{Primary 42C15
}
\keywords{Frames; fusion frames; dual fusion frames.}

\vspace{2cm}
\begin{abstract}
Fusion frames are valuable generalizations of discrete frames. Most
concepts of fusion frames are shared by discrete frames. However,
the dual setting is so complicated. In particular, unlike discrete
frames, two fusion frames are not dual of each other in general. In
this paper, we investigate the structure of the duals of fusion
frames and discuss the relation between the duals of fusion frames
with their associated discrete frames.
\end{abstract}

\maketitle
\section{Introduction and preliminaries}
Fusion frames, as a generalization of frames, are valuable tools to
subdividing a frame system into smaller subsystems and combine
locally data vectors. The theory of fusion frames was systematically
introduced in \cite{Cas04, Cas08}. Since then, many useful results
about the theory and application of fusion frames have been obtained
rapidly \cite{CA, Cas09, Ka94, Mas10}.

In the context of signal transmission, fusion frames and their
alternative duals have important roles in reconstructing signals
in terms of the frame elements. The duals of fusion frames for
experimental data transmission are investigated in \cite{Leng1}.
But the problem that occurs is that the duality properties of
fusion frames are not like discrete frames, such as, the duality
property of fusion frames is not alternative and fusion Riesz
bases have more than one dual. This paper deals with investigating
such problems, which help us to obtain alternative dual fusion
frames.

Let $\mathcal{H}$ be a separable Hilbert space. A \textit{frame}
for $\mathcal{H}$ is a sequence
$\{f_{i}\}_{i=1}^{\infty}\subseteq\mathcal{H}$ such that there are
constants $0<A\leq B<\infty$ satisfying
\begin{eqnarray}\label{Def frame}
A\|f\|^{2}\leq \sum_{i=1}^{\infty}|\langle f,f_{i}\rangle|^{2}\leq
B\|f\|^{2},\qquad f\in \mathcal{H}.
\end{eqnarray}
The constants $A$ and $B$ are called \textit{frame bounds}. If
$A=B$, we call $\{f_{i}\}_{i=1}^{\infty}$ a \textit{tight frame}.
If the right-hand side of (\ref{Def frame}) holds, we say that
$\{f_{i}\}_{i=1}^{\infty}$ is a \textit{Bessel sequence}. Given a
frame $\{f_{i}\}_{i=1}^{\infty}$, the \textit{frame operator} is
defined by
\begin{eqnarray*}
Sf=\sum_{i=1}^{\infty}\langle f,f_{i}\rangle f_i.
\end{eqnarray*}
A direct calculation yields
\begin{eqnarray*}
\langle Sf,f \rangle = \sum_{i\in I}|\langle f,f_{i}\rangle|^{2}.
\end{eqnarray*}
Hence, the series defining $Sf$ converges unconditionally for all
$f\in\mathcal{H}$ and $S$ is a bounded, invertible, and
self-adjoint operator. Hence, we obtain
\begin{eqnarray}\label{frame
decomposition} f=S^{-1}Sf=\sum_{i=1}^{\infty}\langle
f,S^{-1}f_{i}\rangle f_i ,\qquad f\in \mathcal{H}.
\end{eqnarray}
The possibility of representing every $f\in\mathcal{H}$ in this
way is the main feature of a frame. A sequence
$\{f_{i}\}_{i=1}^{\infty}$ is Bessel sequence if and only if the
operator
$T:\ell^{2}\rightarrow\mathcal{H};\{c_{i}\}\mapsto\sum_{i=1}^{\infty}
c_i f_i$, which is called the \textit{synthesis operator}, is
well-defined and bounded. When $\{f_{i}\}_{i=1}^{\infty}$ is a
frame, the synthesis operator T is well-defined, bounded and onto.
A sequence $\{g_i\}_{i=1}^{\infty}\subseteq\mathcal{H}$ is called
a \textit{dual} for Bessel sequence $\{f_{i}\}_{i=1}^{\infty}$ if
\begin{eqnarray}\label{Def Dual}
f=\sum_{i=1}^{\infty}\langle f,g_{i}\rangle f_i ,\qquad f\in
\mathcal{H}.
\end{eqnarray}
Every frame at least has a dual. In fact, if
$\{f_{i}\}_{i=1}^{\infty}$ is a frame, then (\ref{frame
decomposition}) implies that $\{S^{-1}f_{i}\}_{i=1}^{\infty}$,
which is a frame with bounds $B^{-1}$ and $A^{-1}$, is a dual for
$\{f_{i}\}_{i=1}^{\infty}$; it is called the \textit{canonical
dual}. To see a general text in frame theory see \cite{Chr08}.

Let $\{f_i\}_{i=1}^{\infty}$ and $\{g_i\}_{i=1}^{\infty}$ be
Bessel sequences with synthesis operators $T$ and $U$,
respectively. Then from (\ref{Def Dual}) follows immediately that
$\{f_i\}_{i=1}^{\infty}$ and $\{g_i\}_{i=1}^{\infty}$ are dual of
each other if and only if $UT^{*}=I_{\mathcal{H}}$; in particular,
they are frames. For more studies in the duality properties of
frames we refer to \cite{Ar12, Ar13, Ole01, Deh, Gav07, Eli1}.

The following proposition describes a characterization of
alternate dual frames.
\begin{prop} \cite{Ar13, Chr08} \label{ar}
\begin{enumerate}
\item The dual frames of $\{f_i\}_{i=1}^{\infty}$ are precisely as
$\{\Phi\delta_i\}_{i=1}^{\infty}$, where
$\Phi:\ell^2\rightarrow\mathcal{H}$ is a bounded left inverse of
$T^{*}$ and $\{\delta_{i}\}_{i=1}^{\infty}$ is the canonical
orthonormal basis of $\ell^{2}.$ \item  There is a one to one
correspondence between dual frames of $\{f_i\}_{i=1}^{\infty}$ and
operators $\Psi\in B(\mathcal{H},\ell^2)$ such that $T\Psi=0$.
\end{enumerate}
\end{prop}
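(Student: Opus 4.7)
My plan is to reduce both parts to the operator identity stated just before the proposition: two Bessel sequences with synthesis operators $T$ and $U$ are dual of each other if and only if $UT^{*}=I_{\mathcal{H}}$. Under this reformulation, parametrizing dual frames becomes parametrizing bounded left inverses of $T^{*}$, and the second statement is then a standard kernel-affine-space observation about left inverses.

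For part (1), the forward direction is immediate: if $\{g_i\}$ is a dual of $\{f_i\}$, then it is a Bessel sequence, so its synthesis operator $U\in B(\ell^2,\mathcal{H})$ is well-defined and bounded, $UT^{*}=I_{\mathcal{H}}$ by the criterion, and $g_i=U\delta_i$, so $\Phi:=U$ is the desired operator. For the reverse direction, given a bounded left inverse $\Phi$ of $T^{*}$, I would set $g_i:=\Phi\delta_i$, and then verify that the synthesis operator of $\{g_i\}$ coincides with $\Phi$ by evaluating on finite linear combinations of the $\delta_i$'s and extending by continuity (using boundedness of $\Phi$ and density of finite sequences in $\ell^{2}$). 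Applying the duality criterion $\Phi T^{*}=I_{\mathcal{H}}$ then shows $\{g_i\}$ is a dual of $\{f_i\}$; in particular it is a frame.

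For part (2), the natural bijection uses the canonical dual as a base point. Its synthesis operator is $U_0=S^{-1}T$, which satisfies $U_0 T^{*}=S^{-1}TT^{*}=I_{\mathcal{H}}$. For any dual frame $\{g_i\}$ with synthesis $U$, the difference $U-U_0$ annihilates $T^{*}$ on the right, hence $\Psi:=(U-U_0)^{*}\in B(\mathcal{H},\ell^2)$ satisfies $T\Psi=(U-U_0)^{*}{}^{*}$, more precisely $T\Psi=\bigl((U-U_0)T^{*}\bigr)^{*}=0$. Conversely, given any $\Psi\in B(\mathcal{H},\ell^2)$ with $T\Psi=0$, the operator $U:=U_0+\Psi^{*}$ satisfies $UT^{*}=I_{\mathcal{H}}+(T\Psi)^{*}=I_{\mathcal{H}}$, so by part (1) it is the synthesis operator of a dual frame $\{U\delta_i\}$. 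A direct check shows these two assignments are mutually inverse, which yields the one-to-one correspondence.

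I do not anticipate a genuine obstacle: the only step requiring care is the continuity argument that identifies $\Phi$ with the synthesis operator of $\{\Phi\delta_i\}$ in part (1), and the adjoint bookkeeping ($\Phi T^{*}=I$ versus $T\Phi^{*}=I$) when passing between the two formulations in part (2). Both are routine, and the whole proof is essentially a repackaging of the left-inverse structure of $T^{*}$.
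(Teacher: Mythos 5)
Your proposal is correct and follows exactly the route the paper intends: the proposition is stated without proof (it is quoted from \cite{Ar13, Chr08}), and the operator identity $UT^{*}=I_{\mathcal{H}}$ to which you reduce both parts is precisely the duality criterion recorded in the paragraph immediately preceding the statement. Both your identification of a bounded left inverse $\Phi$ of $T^{*}$ with the synthesis operator of $\{\Phi\delta_i\}_{i=1}^{\infty}$ and the affine parametrization $U=S^{-1}T+\Psi^{*}$ around the canonical dual are the standard arguments from the cited sources, and your adjoint bookkeeping ($T\Psi=\bigl((U-U_0)T^{*}\bigr)^{*}=0$) is accurate.
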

We now review preliminary results about fusion frames. Throughout
this paper, $I$ denotes a countable index set and $\pi_V$ is the
orthogonal projection onto a closed subspace $V$ of $\mathcal{H}$.
\begin{defn} Let $\{W_i\}_{i\in I}$ be a family of closed subspaces of $\mathcal{H}$ and $\{\omega_i\}_{i\in I}$ be a family of
weights, i.e. $\omega_i>0$, $i\in I$. Then
$\{(W_i,\omega_i)\}_{i\in I}$ is a \textit{fusion frame} for
$\mathcal{H}$ if there exist constants $0<A\leq B<\infty$ such
that
\begin{eqnarray}\label{Def fusion}
A\|f\|^{2}\leq \sum_{i\in I}\omega_i^2\|\pi_{W_i}f\|^2\leq
B\|f\|^{2},\qquad f\in \mathcal{H}.
\end{eqnarray}
\end{defn}
The constants $A$ and $B$ are called the \textit{fusion frame
bounds}. If we only have the upper bound in (\ref{Def fusion}) we
call $\{(W_i,\omega_i)\}_{i\in I}$ a \textit{Bessel fusion
sequence}. A fusion frame is called \textit{tight}, if $A$ and $B$
can be chosen to be equal, and \textit{Parseval} if $A = B = 1$.
If $\omega_i =\omega$ for all $i\in I$, the collection
$\{(W_i,\omega_i)\}_{i\in I}$ is called \textit{$\omega$-uniform}.
A fusion frame $\{(W_i,\omega_i)\}_{i\in I}$ is said to be an
\textit{orthonormal fusion basis} if $\mathcal{H} =\bigoplus_{i\in
I} W_i$, and it is a \textit{Riesz decomposition} of $\mathcal{H}$
if for every $f\in\mathcal{H}$ there is a unique choice of $f_i\in
W_i$ so that $f =\sum_{i\in I}f_i$.

Recall that for each sequence $\{W_i\}_{i\in I}$ of closed
subspaces in $\mathcal{H}$, the space
\begin{eqnarray*}
(\sum_{i\in I}\oplus W_i)_{\ell^2}=\{\{f_i\}_{i\in I}:f_i\in W_i,
\sum_{i\in I}\|f_i\|^2<\infty\},
\end{eqnarray*}
with the inner product
\begin{eqnarray*}
\langle \{f_i\}_{i\in I},\{g_i\}_{i\in I} \rangle=\sum_{i\in
I}\langle f_i,g_i \rangle,
\end{eqnarray*}
is a Hilbert space.

For a Bessel fusion sequence $\{(W_i,\omega_i)\}_{i\in I}$ for
$\mathcal{H}$, the \textit{synthesis operator} $T_{W}:(\sum_{i\in
I}\oplus W_i)_{\ell^2} \rightarrow\mathcal{H}$ is defined by
\begin{equation*}
T_{W}(\{f_i\}_{i\in I})=\sum_{i\in I}\omega_if_i,\ \
\{f_{i}\}\in(\sum_{i\in I}\oplus W_i)_{\ell^2}.
\end{equation*}
Its adjoint operator $T_{W}^{*}: \mathcal{H}\rightarrow(\sum_{i\in
I}\oplus W_i)_{\ell^2}$ which is called the \textit{analysis
operator} is given by
\begin{eqnarray*}
T_{W}^{*}(f)=\{\omega_{i}\pi_{W_{i}}(f)\},\ \ f\in
\mathcal{H}.\end{eqnarray*} Recall that $\{(W_i,\omega_i)\}_{i\in
I}$ is a fusion frame if and only if the bounded operator $T_{W}$
is onto \cite{Cas04} and its adjoint operator $T_{W}^{*}$ is
(possibly into) isomorphism. If $\{(W_i,\omega_i)\}_{i\in I}$ is a
fusion frame, the \textit{fusion frame operator}
$S_{W}:\mathcal{H}\rightarrow\mathcal{H}$ is defined by $S_{W
}f=T_{W}T^*_{W}f=\sum_{i\in I}\omega_i^{2}\pi_{W_i}f$ is a
bounded, invertible and positive operator and we have the
following reconstruction formula \cite{Cas04}
\begin{eqnarray*}
f=\sum_{i\in I}\omega_i^2S_W^{-1}\pi_{W_i}f,\qquad f\in
\mathcal{H}.
\end{eqnarray*}
The family $\{(S_{W}^{-1}W_i,\omega_i)\}_{i\in I}$, which is also
a fusion frame, is called the \textit{canonical dual} of
$\{(W_i,\omega_i)\}_{i\in I}$ and satisfies the following
reconstruction formula \cite{Gav07}
\begin{eqnarray*}
f=\sum_{i\in
I}\omega_i^2\pi_{S_{W}^{-1}W_i}S_{W}^{-1}\pi_{W_i}f,\qquad f\in
\mathcal{H}.
\end{eqnarray*}
\begin{defn} Let $\{(W_i,\omega_i)\}_{i\in I}$ be a fusion frame by the frame operator
$S_{W}$. A Bessel fusion sequence $\{(V_i,\nu_i)\}_{i\in I}$ is
called a \textit{dual} of $\{(W_i,\omega_i)\}_{i\in I}$ if
\begin{eqnarray}\label{Def:alt}
f=\sum_{i\in
I}\omega_{i}\nu_{i}\pi_{V_i}S_{W}^{-1}\pi_{W_i}f,\qquad f\in
\mathcal{H}.
\end{eqnarray}
\end{defn}
\begin{defn}
Let $\{W_i\}_{i\in I}$ be a family of closed subspaces of
$\mathcal{H}$ and $\{\omega_i\}_{i\in I}$ be a family of weights,
i.e. $\omega_i>0$, $i\in I$. We say that
$\{(W_{i},\omega_{i})\}_{i\in I}$ is a \textit{fusion Riesz basis}
for $\mathcal{H}$ if $\overline{span}_{i\in
I}\{W_{i}\}=\mathcal{H}$ and there exist constants $0< C\leq
D<\infty$ such that for each finite subset $J\subseteq I$
\begin{equation*}
C(\sum_{j\in J}\|f_{j}\|^{2})^{\frac{1}{2}}\leq\|\sum_{j\in
J}\omega_{j}f_{j}\|\leq D(\sum\|f_{j}\|^{2})^{\frac{1}{2}},\ \
f_{j}\in W_{j}.
\end{equation*}
\end{defn}
Some characterizations of fusion Riesz bases are given in the
following theorem.
\begin{thm}\label{rie}\cite{Cas04, naj1}
Let $\{(W_i,1)\}_{i\in I}$ be a fusion frame for $\mathcal{H}$ and
$\{e_{i, j}\}_{j\in J_{i}}$ be a basis for $W_{i}$ for each $i\in
I$. Then the following conditions are equivalent.
\\(1) $\{(W_i, 1)\}_{i\in I}$ is a Riesz decomposition of $\mathcal{H}$.
\\(2) The synthesis operator $T_{W}$ is one-to-one.
\\(3) The analysis operator $T_{W}^{*}$ is onto.
\\(4) $\{(W_{i}, 1)\}_{i\in I}$ is a fusion Riesz basis for $\mathcal{H}$.
\\(5) $\{e_{i, j}\}_{i\in I, j\in J_{i}}$ is a Riesz basis for $\mathcal{H}$.
\end{thm}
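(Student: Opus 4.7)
The plan is to prove the equivalences by establishing the chain $(1)\Leftrightarrow(2)\Leftrightarrow(3)\Leftrightarrow(4)$ by operator-theoretic arguments and then the separate equivalence $(4)\Leftrightarrow(5)$ by passing between the subspace-level and vector-level formulations. Throughout, I would exploit the fusion frame hypothesis: the synthesis operator $T_W:(\sum_{i\in I}\oplus W_i)_{\ell^2}\to\mathcal{H}$ is bounded and onto, while the analysis operator $T_W^*$ is bounded below and hence has closed range.

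First I would dispose of the functional-analytic equivalences $(1)\Leftrightarrow(2)\Leftrightarrow(3)$. For $(2)\Leftrightarrow(3)$, the closed-range property reduces ``$T_W^*$ onto'' to ``$T_W^*$ has dense range'', which is exactly $\ker T_W=\{0\}$. For $(1)\Leftrightarrow(2)$, I would observe that with $\omega_i=1$ the operator $T_W$ is literally the decomposition map $\{f_i\}\mapsto\sum_i f_i$; since $T_W$ is already onto, existence of a decomposition is automatic, and uniqueness of the decomposition is precisely the statement $\ker T_W=\{0\}$.

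For $(2)\Leftrightarrow(4)$: if $T_W$ is a bounded bijection, the open mapping theorem upgrades it to a topological isomorphism, yielding constants $C,D>0$ with $C\|\{f_i\}\|_{\ell^2}\leq\|T_W\{f_i\}\|\leq D\|\{f_i\}\|_{\ell^2}$. Restricting to finite tuples (with $\omega_i=1$) recovers the fusion Riesz basis inequality. Conversely, the lower bound in that inequality, extended from finite sums to $\ell^2$-sums by continuity, forces injectivity of $T_W$. Combined with surjectivity, this also gives $(4)\Rightarrow(1)$ through the already established links.

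Finally, for $(4)\Leftrightarrow(5)$, I would expand each $f_i\in W_i$ in the given basis as $f_i=\sum_{j\in J_i}c_{i,j}e_{i,j}$. Reading $\{e_{i,j}\}_{j}$ as orthonormal bases of the $W_i$, one has $\|f_i\|^2=\sum_j|c_{i,j}|^2$ and therefore $\sum_i\|f_i\|^2=\sum_{i,j}|c_{i,j}|^2$; under this identification, the fusion Riesz basis inequality on finite subsets $\{f_j\}$ translates verbatim into the Riesz basis inequality on finite subsets $\{c_{i,j}e_{i,j}\}$, yielding the two-way implication. The most delicate point in the whole argument will be the passage between the \emph{finite-sum} formulation used in the definition of a (fusion) Riesz basis and the \emph{$\ell^2$-sum} operator formulation on $(\sum\oplus W_i)_{\ell^2}$, required in $(4)\Rightarrow(2)$ and in relating a Riesz decomposition to sequences in $(\sum\oplus W_i)_{\ell^2}$; I expect this to be dispatched by standard density and continuity arguments.
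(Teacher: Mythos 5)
The paper does not prove this theorem at all: it is quoted from \cite{Cas04, naj1} and used as a black box, so there is no in-paper argument to compare against. Your reconstruction follows the standard route of those references --- reduce everything to properties of the bounded surjection $T_W$, use closed range for $(2)\Leftrightarrow(3)$, the open mapping theorem for $(2)\Leftrightarrow(4)$, and a coordinatization of $(\sum_{i}\oplus W_i)_{\ell^2}$ for $(4)\Leftrightarrow(5)$ --- and in outline it is correct.

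Two points deserve more than the wave of the hand you give them. First, in $(4)\Leftrightarrow(5)$ you silently upgrade ``basis'' to ``orthonormal basis'' when you write $\|f_i\|^2=\sum_j|c_{i,j}|^2$. This is not a cosmetic convenience: with the statement read literally (arbitrary bases $e_{i,j}$ of $W_i$), the implication $(4)\Rightarrow(5)$ is false --- take $I$ a singleton, $W_1=\mathcal{H}$, and a Schauder basis of $\mathcal{H}$ that is not a Riesz basis. The cited sources assume orthonormal local bases, and your proof needs that hypothesis (or at least uniform Riesz bounds for the $e_{i,j}$ across $i$); you should state it rather than read it in. Under orthonormality the cleanest version of your argument is that $\{f_i\}\mapsto\{c_{i,j}\}$ is a unitary from $(\sum_i\oplus W_i)_{\ell^2}$ onto $\ell^2(\bigcup_i J_i)$ intertwining $T_W$ with the synthesis operator of $\{e_{i,j}\}$, so $T_W$ is injective (equivalently bijective) iff that synthesis operator is a bounded bijection of $\ell^2$, i.e.\ iff $\{e_{i,j}\}$ is a Riesz basis; this gives $(2)\Leftrightarrow(5)$ directly and avoids the finite-subset bookkeeping. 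Second, in $(2)\Rightarrow(1)$ the uniqueness in the definition of a Riesz decomposition ranges over \emph{all} convergent decompositions $f=\sum_i f_i$ with $f_i\in W_i$, not only those with $\{f_i\}\in(\sum_i\oplus W_i)_{\ell^2}$, so injectivity of $T_W$ on the $\ell^2$ space does not immediately give uniqueness. The fix is short but should be said: once $T_W$ is a bounded bijection, apply $T_W^{-1}$ to the finite partial sums $\sum_{i\in F}f_i\to f$ to conclude that the coordinates of $T_W^{-1}f$ are exactly the $f_i$, so every (unconditionally) convergent decomposition is automatically the $\ell^2$ one. With those two repairs the proof is complete and is, for all practical purposes, the proof in the cited literature.
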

\begin{lem}\label{har}\cite{Gav07}
Let $T\in B(\mathcal{H})$ and $V\subseteq\mathcal{H}$ be a closed
subspace . Then we have
\begin{equation*}
\pi_{V}T^{*}=\pi_{V}T^{*}\pi_{\overline{TV}}.
\end{equation*}
\end{lem}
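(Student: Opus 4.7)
The plan is to prove this by a straightforward orthogonal decomposition argument on the domain of $T^*$. Write any $f \in \mathcal{H}$ as $f = \pi_{\overline{TV}}f + (I - \pi_{\overline{TV}})f$, and apply $\pi_V T^*$ to each piece. The identity will follow once I show that $\pi_V T^*$ annihilates the second piece, i.e.\ that $T^*$ maps $(\overline{TV})^\perp$ into $V^\perp$.

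First I would note that $(\overline{TV})^\perp = (TV)^\perp$, since taking the orthogonal complement is insensitive to closure. Then for any $g \in (TV)^\perp$ and any $v \in V$, the adjoint identity gives
\begin{equation*}
\langle T^*g, v\rangle = \langle g, Tv\rangle = 0,
\end{equation*}
so $T^*g \in V^\perp$, and therefore $\pi_V T^* g = 0$. Applying this with $g = (I - \pi_{\overline{TV}})f$ (which lies in $(\overline{TV})^\perp$) yields
\begin{equation*}
\pi_V T^* f = \pi_V T^* \pi_{\overline{TV}} f + \pi_V T^* (I - \pi_{\overline{TV}})f = \pi_V T^* \pi_{\overline{TV}} f,
\end{equation*}
which is the desired identity.

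There is essentially no obstacle here; the only subtlety worth flagging is the justification $(\overline{TV})^\perp = (TV)^\perp$, which is standard and follows from continuity of the inner product. Everything else is a one-line consequence of the defining property of the adjoint together with the fact that $\pi_V$ kills $V^\perp$.
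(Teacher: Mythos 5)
Your proof is correct; the paper states this lemma as a citation to [Gav07] and gives no proof of its own, and your argument (decompose $f$ along $\overline{TV}\oplus(\overline{TV})^{\perp}$ and use $\langle T^*g,v\rangle=\langle g,Tv\rangle=0$ for $g\perp TV$, $v\in V$) is exactly the standard one used there. The one subtlety you flag, $(\overline{TV})^{\perp}=(TV)^{\perp}$, is handled correctly.
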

This paper is organized as follows: In Section 2, we compare the
duality properties of discrete and fusion frames and by presenting
examples of fusion frames we show that some well-known results on
discrete frames are not valid on fusion frames. Also we
investigate the cases that these properties can satisfy on fusion
frames. In Section 3, we investigate the relation between the
duals of fusion frames, local frames and the associated discrete
frames and we try to characterize the dual of fusion frames.
\section{Contrasting of dual of fusion frames}
For a fusion frame $\{(W_i,\omega_i)\}_{i\in I}$ and a Bessel
fusion sequence $\{(V_i,\nu_i)\}_{i\in I}$, we define
$\phi:(\sum_{i\in I}\oplus W_i)_{\ell^{2}}\rightarrow (\sum_{i\in
I}\oplus V_i)_{\ell^{2}}$ by
\begin{eqnarray}\label{Def:phi}
\phi(\{f_i\}_{i\in I})=\{\pi_{V_i}S_{W}^{-1}f_i\}_{i\in
I}.\end{eqnarray} It is easy to see that $\phi$ is a linear
operator and $\|\phi\|\leq \|S_W^{-1}\|$, its adjoint can be given
by $\phi^{*}(\{g_i\}_{i\in I})=\{\pi_{W_i}S_W^{-1}g_i\}_{i\in I}$,
for all $\{g_i\}_{i\in I}\in (\sum_{i\in I}\oplus
V_i)_{\ell^{2}}$. Now, the identity (\ref{Def:alt}) can be written
in an operator form as follows.
\begin{lem}\label{dual by preframes}
Let $\{(W_i,\omega_i)\}_{i\in I}$ be a fusion frame. A Bessel
fusion frame $\{(V_i,\nu_i)\}_{i\in I}$ is a dual of $\{(W_i,
\omega_{i})\}_{i\in I}$ if and only if
\begin{eqnarray}\label{u.phi.T*}
T_{V}\phi T^{*}_{W}=I_{\mathcal{H}},
\end{eqnarray}
where $T_{W}$ and $T_{V}$ are the synthesis operators of
$\{W_i\}_{i\in I}$ and $\{V_i\}_{i\in I}$, respectively.
\end{lem}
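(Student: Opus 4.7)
The plan is to prove this by a direct computation: unpack the composition $T_V\phi T_W^{*}$ applied to an arbitrary $f\in\mathcal{H}$ and match it term-by-term with the right-hand side of the duality relation (\ref{Def:alt}). Since the equivalence is an ``if and only if'' where both sides are equalities quantified over all $f\in\mathcal{H}$, showing that the two expressions agree pointwise will give both directions simultaneously.

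Concretely, I would start by recalling that the analysis operator of $\{(W_i,\omega_i)\}_{i\in I}$ sends $f$ to the sequence $\{\omega_i\pi_{W_i}f\}_{i\in I}\in(\sum_{i\in I}\oplus W_i)_{\ell^2}$. Applying the definition (\ref{Def:phi}) of $\phi$ to this sequence yields $\{\pi_{V_i}S_W^{-1}(\omega_i\pi_{W_i}f)\}_{i\in I}=\{\omega_i\pi_{V_i}S_W^{-1}\pi_{W_i}f\}_{i\in I}$, where the scalar $\omega_i$ passes through $\pi_{V_i}S_W^{-1}$ by linearity. Finally, applying the synthesis operator $T_V$ (which multiplies the $i$-th coordinate by $\nu_i$ and sums) gives
\begin{equation*}
T_V\phi T_W^{*}f=\sum_{i\in I}\omega_i\nu_i\pi_{V_i}S_W^{-1}\pi_{W_i}f.
\end{equation*}

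With this identity in hand, the conclusion is immediate: $T_V\phi T_W^{*}=I_{\mathcal{H}}$ holds precisely when the above sum equals $f$ for every $f\in\mathcal{H}$, which is exactly the defining condition (\ref{Def:alt}) for $\{(V_i,\nu_i)\}_{i\in I}$ to be a dual of $\{(W_i,\omega_i)\}_{i\in I}$. There is no genuine obstacle here; the only thing to be a little careful about is the justification that $\phi$ is well-defined and bounded (so that the composition $T_V\phi T_W^{*}$ makes sense on all of $\mathcal{H}$), but this was already noted in the discussion preceding the lemma via the bound $\|\phi\|\leq\|S_W^{-1}\|$ and the Bessel assumption on $\{(V_i,\nu_i)\}_{i\in I}$. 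The argument is therefore essentially a bookkeeping verification that the operator form (\ref{u.phi.T*}) encodes exactly the reconstruction formula (\ref{Def:alt}).
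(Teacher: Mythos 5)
Your computation is correct and is exactly the unpacking the paper intends: the paper states this lemma without proof as an immediate operator reformulation of (\ref{Def:alt}), and your chain $T_W^{*}f=\{\omega_i\pi_{W_i}f\}$, then $\phi$, then $T_V$ reproduces the sum $\sum_{i\in I}\omega_i\nu_i\pi_{V_i}S_W^{-1}\pi_{W_i}f$ precisely. Nothing further is needed.
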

By Lemma \ref{dual by preframes}, we deduce that, unlike discrete
frames, two fusion frames are not dual of each other in general.
Here we present an example which confirms this statement.
\begin{ex}\label{S}
Let $I=\{1, 2, ..., 6\}$. Consider
\begin{equation*}
W_{1}=span\{(1, 0, 0)\},\ \ W_{2}=span\{(0, 1, 0)\},\ \
W_{3}=span\{(0, 0, 1)\},
\end{equation*}
\begin{equation*}
W_{4}=span\{(0, 1, 0)\},\ \ W_{5}=span\{(1, 0, 0)\},\ \
W_{6}=span\{(0, 0, 1)\},
\end{equation*}
and $\omega_{i}=1$, for $i\in I$. Also take
\begin{equation*}
V_{1}=span\{(1, 0, 0)\},\ \ V_{2}=span\{(0, 1, 0)\},\ \
V_{3}=span\{(0, 0, 1)\},
\end{equation*}
\begin{equation*}
V_{4}=span\{(0, 0, 1)\},\ \ V_{5}=span\{(0, 1, 0)\},\ \
V_{6}=span\{(1, 0, 0)\},
\end{equation*}
and $\nu_{i}=2$, for $i\in I$. Then $\{(W_{i},\omega_{i})\}_{i\in
I}$ and $\{(V_{i},\nu_{i})\}_{i\in I}$ are fusion frames for
$\mathbb{R}^{3}$ with frame operators $S_{W}=2I_{\mathbb{R}^{3}}$
and $S_{V}=8I_{\mathbb{R}^{3}}$, respectively. The following
calculation shows that $\{(V_{i}, 2)\}_{i\in I}$ is an alternative
dual of $\{(W_{i}, 1)\}_{i\in I}$.
\begin{eqnarray*}
\sum_{i\in I}\nu_{i}\omega_{i}\pi_{V_{i}}S_{W}^{-1}\pi_{W_{i}}(a,
b, c)&=&2[\frac{1}{2}(a, 0, 0)+\frac{1}{2}(0, b, 0)+\frac{1}{2}(0,
0, c)]\\&=&(a, b, c),\ \ \ (a, b, c)\in\mathbb{R}^{3}.
\end{eqnarray*}
But $\{(W_{i}, 1)\}_{i\in I}$ is not an alternative dual of
$\{(V_{i}, 2)\}_{i\in I}$. In fact
\begin{eqnarray*}
\sum_{i\in I}\nu_{i}\omega_{i}\pi_{W_{i}}S_{V}^{-1}\pi_{V_{i}}(a,
b, c)=2[\frac{1}{8}(a, 0, 0)+\frac{1}{8}(0, b, 0)+\frac{1}{8}(0,
0, c)]\neq (a, b, c).
\end{eqnarray*}
\end{ex}
Now, it is natural to ask when two Bessel fusion frames are dual
of each other. To answer this question assume that $\{(W_i,
\omega_{i})\}_{i\in I}$ is also a dual fusion frame for $\{(V_i,
\nu_{i})\}_{i\in I}$ or equivalently (by Lemma \ref{dual by
preframes})
\begin{eqnarray}\label{khan}
T_{W}\psi T_{V}^{*}=I_{\mathcal{H}},
\end{eqnarray}
where $\psi:(\sum_{i\in I}\oplus V_i)_{\ell^{2}}\rightarrow
(\sum_{i\in I}\oplus W_i)_{\ell^{2}}$ is given by
\begin{eqnarray*}
\psi(\{g_i\}_{i\in I})=\{\pi_{W_i}S_V^{-1}g_i\}_{i\in I}.
\end{eqnarray*}
\begin{prop} Let $\{(W_i, 1)\}_{i\in I}$ be a fusion frame with a dual $\{(V_i, 1)\}_{i\in I}$.
Then the fusion frame $\{W_i\}_{i\in I}$ is also a dual of
$\{V_i\}_{i\in I}$ if
\begin{eqnarray}\label{dual each other}
\phi^{*}=\psi.
\end{eqnarray}
Moreover the converse is hold if $\{W_{i}\}_{i\in I}$ and
$\{V_{i}\}_{i\in I}$ are fusion Riesz bases.
\end{prop}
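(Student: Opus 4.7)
The plan is to translate both duality conditions into operator equations via Lemma \ref{dual by preframes} and then play injectivity/surjectivity of the synthesis and analysis operators against each other.

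First, I would write out the two hypotheses in operator form. The assumption that $\{(V_i,1)\}_{i\in I}$ is a dual of $\{(W_i,1)\}_{i\in I}$ becomes
\begin{equation*}
T_{V}\phi T_{W}^{*}=I_{\mathcal{H}},
\end{equation*}
while the desired conclusion that $\{(W_i,1)\}_{i\in I}$ is a dual of $\{(V_i,1)\}_{i\in I}$ is the relation (\ref{khan}), namely $T_{W}\psi T_{V}^{*}=I_{\mathcal{H}}$. Taking adjoints in the first display yields $T_{W}\phi^{*}T_{V}^{*}=I_{\mathcal{H}}$, so if $\phi^{*}=\psi$ we immediately obtain (\ref{khan}). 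This handles the sufficient direction.

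For the converse, suppose both duality relations hold; after taking the adjoint of $T_{V}\phi T_{W}^{*}=I_{\mathcal{H}}$ and subtracting from $T_{W}\psi T_{V}^{*}=I_{\mathcal{H}}$, I would get
\begin{equation*}
T_{W}(\psi-\phi^{*})T_{V}^{*}=0.
\end{equation*}
Now I invoke Theorem \ref{rie}: because $\{W_{i}\}_{i\in I}$ is a fusion Riesz basis the synthesis operator $T_{W}$ is injective, and because $\{V_{i}\}_{i\in I}$ is a fusion Riesz basis the analysis operator $T_{V}^{*}$ is surjective. Injectivity of $T_{W}$ yields $(\psi-\phi^{*})T_{V}^{*}=0$, and surjectivity of $T_{V}^{*}$ then lets me evaluate $\psi-\phi^{*}$ at an arbitrary element of $(\sum_{i\in I}\oplus V_{i})_{\ell^{2}}$ by pulling it back through $T_{V}^{*}$, forcing $\psi=\phi^{*}$.

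The main technical checkpoint, rather than obstacle, is making sure the Theorem \ref{rie} machinery applies here. In particular, I have to cite the equivalence (2)$\Leftrightarrow$(3)$\Leftrightarrow$(4) to conclude that the fusion Riesz basis hypothesis delivers both the injectivity of $T_{W}$ and the surjectivity of $T_{V}^{*}$ simultaneously; without both, one only gets that $\psi-\phi^{*}$ vanishes on a proper subspace and the identification could fail. Everything else is a routine chase with adjoints and the identity $T_{V}\phi T_{W}^{*}=I_{\mathcal{H}}$ supplied by Lemma \ref{dual by preframes}.
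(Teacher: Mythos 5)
Your proof is correct and follows essentially the same route as the paper: both directions reduce to the operator identities $T_{V}\phi T_{W}^{*}=I_{\mathcal{H}}$ and $T_{W}\psi T_{V}^{*}=I_{\mathcal{H}}$, with the forward implication obtained by taking adjoints (the paper phrases this as an inner-product computation, which is equivalent) and the converse obtained from the invertibility of $T_{W}$ and $T_{V}^{*}$ supplied by Theorem \ref{rie}. Your explicit remark that one needs both the injectivity of $T_{W}$ and the surjectivity of $T_{V}^{*}$ is a useful clarification of the paper's terser "we deduce the proof."
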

\begin{proof}
Let $\{V_i\}_{i\in I}$ be a dual of $\{W_i\}_{i\in I}$, then by
using (\ref{u.phi.T*}) and (\ref{dual each other}) we obtain
\begin{eqnarray*}
\left\langle \sum_{i\in I}\pi_{W_i}S_V^{-1}\pi_{V_i}f,
f\right\rangle &=&\langle T_{W}\phi^{*}T_{V}^{*}f,
f\rangle\\
&=&\langle f, T_{V}\phi T_{W}^{*}f\rangle=\langle f,f \rangle,\ \
f\in\mathcal{H},
\end{eqnarray*}
i.e. fusion frame $\{W_i\}_{i\in I}$ is also a dual for
$\{V_i\}_{i\in I}$.
\\For the proof of moreover part, since $\{W_{i}\}_{i\in I}$ and
$\{V_{i}\}_{i\in I}$ are fusion Riesz bases, by Theorem \ref{rie},
$T_{W}$ and $T_{V}^{*}$ are invertible. So we deduce the proof by
(\ref{u.phi.T*}) and (\ref{khan}).
\end{proof}
\begin{cor}
Let $\{f_{i}\}_{i\in I}\subseteq \mathcal{H}$ and
$W_{i}=\overline{span}_{i\in I}\{f_{i}\}$ for each $i\in I$.
Suppose that $\{(W_{i}, 1)\}_{i\in I}$ is a tight fusion frame for
$\mathcal{H}$. Then $\{(W_{i}, 1)\}_{i\in I}$ is also a dual
fusion frame of $\{(S_{W}^{-1}W_{i}, 1)\}_{i\in I}.$
\end{cor}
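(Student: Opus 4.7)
The plan is to exploit that tightness of a fusion frame reduces its frame operator to a scalar multiple of the identity, which collapses the canonical dual onto the original fusion frame and thereby makes the duality relation automatically symmetric.

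First, I would use tightness of $\{(W_i,1)\}_{i\in I}$ with bound $A>0$ and the identity $\langle S_W f,f\rangle = \sum_{i\in I}\|\pi_{W_i}f\|^2 = A\|f\|^2$ to conclude, via positivity and self-adjointness of $S_W$, that $S_W = A\,I_{\mathcal{H}}$, whence $S_W^{-1} = A^{-1}I_{\mathcal{H}}$. In particular, $S_W^{-1}W_i = W_i$ for every $i\in I$, so the fusion frames $\{(W_i,1)\}_{i\in I}$ and $\{(S_W^{-1}W_i,1)\}_{i\in I}$ coincide and share the common frame operator $S_V = S_W = A\,I_{\mathcal{H}}$.

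Next, I would apply the preceding Proposition with $V_i := S_W^{-1}W_i = W_i$. Using that the orthogonal projection onto $W_i$ acts as the identity on any vector of $W_i$, one computes
\[
\phi(\{f_i\}_{i\in I}) = \{\pi_{W_i}S_W^{-1}f_i\}_{i\in I} = \{A^{-1}f_i\}_{i\in I},\qquad \psi(\{g_i\}_{i\in I}) = \{A^{-1}g_i\}_{i\in I},
\]
so both $\phi$ and $\psi$ equal $A^{-1}$ times the identity on $(\sum_{i\in I}\oplus W_i)_{\ell^2}$, and in particular $\phi^{*} = \phi = \psi$. Since the canonical dual $\{(S_W^{-1}W_i,1)\}_{i\in I}$ is by construction a dual of $\{(W_i,1)\}_{i\in I}$, the hypothesis $\phi^{*} = \psi$ of the Proposition is met, and I conclude that $\{(W_i,1)\}_{i\in I}$ is itself a dual of $\{(S_W^{-1}W_i,1)\}_{i\in I}$.

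The main (and essentially only) step is the observation $S_W = A\,I_{\mathcal{H}}$; once this is in hand, I do not anticipate any real obstacle. I note in passing that the hypothesis $W_i = \overline{span}\{f_i\}$ plays no role in the argument, so the corollary in fact holds for any tight fusion frame with unit weights, irrespective of whether the component subspaces are one-dimensional.
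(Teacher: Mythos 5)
Your proof is correct and follows what is evidently the intended route: the paper states this corollary without proof as a consequence of the preceding proposition, and your argument (tightness forces $S_W=A\,I_{\mathcal{H}}$, hence $S_W^{-1}W_i=W_i$, $\phi=\psi=A^{-1}I$ is self-adjoint, and the proposition applies) is exactly that derivation. Your closing observation that the hypothesis $W_i=\overline{span}\{f_i\}$ is never used is also accurate.
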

One of the important results in the duality of discrete frames is
that every Riesz basis has just a unique dual (canonical dual) and
that dual is Riesz basis as well. But the following example shows
that this property is not confirmed in fusion Riesz bases.
\begin{ex}\label{dual riesz}
Consider
\begin{equation*}
W_{1}=span\{(1, 0, 0)\},\ \ W_{2}=span\{(1, 1, 0)\}, \ \
W_{3}=span\{(0, 0, 1)\},
\end{equation*}
Then $\{(W_{i}, 1)\}_{i=1}^{3}$ is a fusion frame for
$\mathbb{R}^{3}$ with bounds $1-\frac{\sqrt{2}}{2}$ and $2$, and
the frame operator
\[S_W=\begin{bmatrix}
3/2&1/2&0\\
1/2&1/2&0\\
0&0&1\\
\end{bmatrix}.\]
It is not difficult to see that $\{(W_{i}, 1)\}_{i=1}^{3}$ is a
fusion Riesz basis and its canonical dual can be given with
\begin{equation*}
S^{-1}_WW_{1}=span\{(1, -1, 0)\},\ \ S^{-1}_WW_{2}=span\{(0, 1,
0)\}, \ \ S^{-1}_WW_{3}=span\{(0, 0, 1)\}.
\end{equation*}
To construct an alternate dual consider
\begin{equation*}
V_{1}=\mathbb{R}^2\times\{0\},\ \ V_{2}=S^{-1}_WW_{2}\ \
V_{3}=S^{-1}_WW_{3}.
\end{equation*}
Then $\{(V_{i}, 1)\}_{i=1}^{3}$ is a fusion frame for
$\mathbb{R}^{3}$. Moreover, if $f=(a,b,c)$ then
\begin{eqnarray*}
\sum_{i=1}^3 \pi_{V_i}S_W^{-1}\pi_{W_i}f&=&\pi_{V_1}(a,-a,0)+\pi_{V_2}(0,a+b,0)+\pi_{V_3}(0,0,c)\\
&=&(a,b,c)=f.
\end{eqnarray*}
Hence, the fusion Riesz basis $\{(W_{i}, 1)\}_{i=1}^{3}$ has more
than one dual and the second dual is not a fusion Riesz basis.
\end{ex}
\section{More results on dual construction}
Let $\{(W_{i},\omega_{i})\}_{i\in I}$ be a fusion frame for
$\mathcal{H}$. By considering a frame for each subspace $W_i$ we
can construct a discrete frame for $\mathcal{H}$. We begin with
the following key theorem.
\begin{thm}\label{relation fus.dis}\cite{Cas04}
For each $i\in I$ let $\omega_i>0$ and let $\{f_{i, j}\}_{j\in
J_{i}}$ be a frame sequence in $\mathcal{H}$ with the frame bounds
$A_i$ and $B_i$. Define $W_i=\overline{span}_{j\in
J_{i}}\{f_{i,j}\}$ for all $i\in I$ and assume that
\begin{eqnarray*}
0<A=inf_{i\in I}A_i\leq B=sup_{i\in I}B_i<\infty.
\end{eqnarray*}
Then $\{\omega_{i}f_{i, j}\}_{i\in I, j\in J_{i}}$ is a frame for
$\mathcal{H}$ if and only if $\{(W_{i},\omega_{i})\}_{i\in I}$ is
a fusion frame for $\mathcal{H}$.
\end{thm}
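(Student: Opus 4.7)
The plan is to prove both directions via a single two-sided inequality obtained by applying the frame-sequence hypothesis separately on each subspace. First I would fix $f\in\mathcal{H}$ and observe that since each $f_{i,j}$ lies in $W_i$, one has $\langle f,f_{i,j}\rangle=\langle \pi_{W_i}f,f_{i,j}\rangle$. Using that $\{f_{i,j}\}_{j\in J_i}$ is a frame for $W_i$ with bounds $A_i, B_i$, this yields, for every $i\in I$,
\begin{equation*}
A_i\|\pi_{W_i}f\|^2\leq\sum_{j\in J_i}|\langle f,f_{i,j}\rangle|^2\leq B_i\|\pi_{W_i}f\|^2.
\end{equation*}

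Next I would multiply by $\omega_i^2$, invoke the uniform bounds $A\leq A_i$ and $B_i\leq B$, and sum over $i\in I$ (all terms being nonnegative, the double sum may be rearranged freely). This produces the key sandwich
\begin{equation*}
A\sum_{i\in I}\omega_i^2\|\pi_{W_i}f\|^2\leq\sum_{i\in I}\sum_{j\in J_i}|\langle f,\omega_i f_{i,j}\rangle|^2\leq B\sum_{i\in I}\omega_i^2\|\pi_{W_i}f\|^2,
\end{equation*}
which directly ties the fusion frame inequality for $\{(W_i,\omega_i)\}_{i\in I}$ to the discrete frame inequality for $\{\omega_i f_{i,j}\}_{i\in I, j\in J_i}$.

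From here both implications are immediate. If $\{(W_i,\omega_i)\}_{i\in I}$ is a fusion frame with bounds $C\leq D$, the sandwich supplies discrete frame bounds $AC$ and $BD$. Conversely, if $\{\omega_i f_{i,j}\}$ is a discrete frame with bounds $\alpha\leq\beta$, dividing the sandwich through by $B$ on the left and by $A$ on the right supplies fusion frame bounds $\alpha/B$ and $\beta/A$.

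The main obstacle is not technical; rather it is to see that the uniform positivity $A>0$ and uniform finiteness $B<\infty$ are what allow the sandwich to close on \emph{both} sides. Without them, only partial inequalities would survive, as the conversions $AC,BD$ and $\alpha/B,\beta/A$ genuinely require $A$ to be invertible and $B$ to be finite. Verifying that both directions of the equivalence hinge on this joint hypothesis is really the only delicate point; the remainder is the interchange of summation and the elementary fact $\langle f,f_{i,j}\rangle=\langle \pi_{W_i}f,f_{i,j}\rangle$.
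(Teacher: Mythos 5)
Your proof is correct and is essentially the standard argument from the cited source \cite{Cas04} (the paper itself quotes the theorem without proof): the identity $\langle f,f_{i,j}\rangle=\langle\pi_{W_i}f,f_{i,j}\rangle$, the local frame inequalities on each $W_i$, and the weighted summation over $i$ yield exactly the two-sided sandwich from which both implications follow with the bounds $AC$, $BD$ and $\alpha/B$, $\beta/A$. Nothing is missing; the nonnegativity justification for rearranging the double sum and the role of the uniform bounds $A>0$, $B<\infty$ are correctly identified.
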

In this paper, we call $F_i=\{f_{i, j}\}_{j\in J_{i}}$, $i\in I$,
local frames of $W_{i}$ and $\{\omega_{i}f_{i, j}\}_{i\in I, j\in
J_{i}}$, the associated discrete frames of $\mathcal{H}$, which
satisfy in above theorem.

Our aim in this section is to study the relation between the duals
of fusion frames, local frames and the associated discrete frames
of $\mathcal{H}$. In particular, in the following theorem we
investigate the relation between the duals of local frames of
$W_{i}$ with the associated discrete frames of $\mathcal{H}$.
\begin{thm}
Let $\{(W_{i},\omega_{i})\}_{i\in I}$ be a fusion frame for
$\mathcal{H}$ with local frames $\{f_{i, j}\}_{j\in J_{i}}$ for
each $i\in I$. If $\{g_{i, j}\}_{j\in J_{i}}$ is a dual frame of
$\{f_{i, j}\}_{j\in J_{i}}$, then $\{w_{i}f_{i, j}\}_{i\in I, j\in
J_{i}}$ is a frame for $\mathcal{H}$ with dual frame
$\{w_{i}S_{W}^{-1}(g_{i, j})\}_{i\in I, j\in J_{i}}$.
\end{thm}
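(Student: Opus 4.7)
The plan is to verify the reconstruction identity defining a dual frame, after first recording that both the frame $\{\omega_i f_{i,j}\}_{i\in I,\,j\in J_i}$ and the candidate dual $\{\omega_i S_W^{-1}g_{i,j}\}_{i\in I,\,j\in J_i}$ are at least Bessel. The first is a frame for $\mathcal{H}$ by Theorem \ref{relation fus.dis} (the local frame bounds being uniform is the implicit consistency condition coming with the hypothesis that the $\{f_{i,j}\}$ are local frames realising the fusion frame structure). Applying the same theorem to $\{g_{i,j}\}_{j\in J_i}$ shows that $\{\omega_i g_{i,j}\}_{i\in I,\,j\in J_i}$ is Bessel, and composing with the bounded operator $S_W^{-1}$ yields that $\{\omega_i S_W^{-1}g_{i,j}\}_{i\in I,\,j\in J_i}$ is Bessel as well. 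It then suffices to prove
\begin{equation*}
f \;=\; \sum_{i\in I,\,j\in J_i}\langle f,\omega_i S_W^{-1}g_{i,j}\rangle\,\omega_i f_{i,j},\qquad f\in\mathcal{H}.
\end{equation*}

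For the core computation, I would start from the identity $f = S_W S_W^{-1}f = \sum_{i\in I}\omega_i^2 \pi_{W_i}(S_W^{-1}f)$. For each $i$, since $\{f_{i,j}\}_{j\in J_i}$ and $\{g_{i,j}\}_{j\in J_i}$ are dual frames for $W_i$, and since $g_{i,j}\in W_i$ forces $\langle h,g_{i,j}\rangle = \langle \pi_{W_i}h,g_{i,j}\rangle$ for every $h\in\mathcal{H}$, the local reconstruction inside $W_i$ reads
\begin{equation*}
\pi_{W_i}h \;=\; \sum_{j\in J_i}\langle h,g_{i,j}\rangle f_{i,j},\qquad h\in\mathcal{H}.
\end{equation*}
Substituting $h=S_W^{-1}f$ and using self-adjointness of $S_W^{-1}$ gives
\begin{equation*}
f \;=\; \sum_{i\in I}\omega_i^2\sum_{j\in J_i}\langle S_W^{-1}f,g_{i,j}\rangle f_{i,j} \;=\; \sum_{i\in I,\,j\in J_i}\langle f,\omega_i S_W^{-1}g_{i,j}\rangle\,\omega_i f_{i,j},
\end{equation*}
which is the required reconstruction formula.

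The main obstacle I expect is essentially bookkeeping rather than anything deep: the collapse of the two nested sums into a single sum over $(i,j)$ must be justified, and this follows once both sequences are known to be Bessel, since a dual expansion for a Bessel pair converges unconditionally. A secondary technical point worth recording explicitly is the identity $\langle h,g_{i,j}\rangle = \langle \pi_{W_i}h,g_{i,j}\rangle$, which relies on $g_{i,j}\in W_i$; this is where interpreting $\{g_{i,j}\}$ as a dual frame for $W_i$ (viewed as a Hilbert space in its own right), rather than as an arbitrary sequence in $\mathcal{H}$, is crucial. With those two ingredients in place the argument is a clean composition of the fusion frame reconstruction with the local frame reconstruction, twisted by $S_W^{-1}$.
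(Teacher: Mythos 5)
Your proof is correct and essentially the same as the paper's: both combine the local dual reconstruction inside each $W_i$ with $S_W^{-1}S_W=I_{\mathcal{H}}$, the only difference being that you establish $f=\sum_{i,j}\langle f,\omega_iS_W^{-1}g_{i,j}\rangle\,\omega_if_{i,j}$ while the paper establishes the adjoint identity $f=\sum_{i,j}\langle f,\omega_if_{i,j}\rangle\,\omega_iS_W^{-1}g_{i,j}$, the two being equivalent once both families are Bessel. One caveat worth recording (which the paper's proof ignores entirely): your appeal to Theorem \ref{relation fus.dis} to obtain the Bessel property of $\{\omega_ig_{i,j}\}$ tacitly requires the upper frame bounds of the local duals $\{g_{i,j}\}_{j\in J_i}$ to be uniformly bounded over $i$, which is not among the stated hypotheses when the $g_{i,j}$ are arbitrary (non-canonical) duals.
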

\begin{proof}
Since $\{g_{i, j}\}_{j\in J_{i}}$ is dual frame of $\{f_{i,
j}\}_{j\in J_{i}}$ for $W_{i}$ for each $i\in I$, we obtain
\begin{eqnarray*}
\pi_{W_{i}}(f)=\sum_{j\in J_{i}}\langle\pi_{W_{i}}(f), f_{i,
j}\rangle g_{i, j}=\sum_{j\in J_{i}}\langle f, f_{i, j}\rangle
g_{i, j},\ \ f\in \mathcal{H},\ i\in I.
\end{eqnarray*}
Therefore,
\begin{eqnarray*}
\sum_{i\in I}\sum_{j\in J_{i}}\langle f, w_{i}f_{i, j}\rangle
S_{W}^{-1}(w_{i}g_{i, j})&=&S_{W}^{-1}\sum_{i\in
I}w_{i}^{2}\pi_{W_{i}}f\\&=&S_{W}^{-1}S_{W}f=f.
\end{eqnarray*}
\end{proof}
Suppose that $\{(W_{i},\omega_{i})\}_{i\in I}$ is a fusion frame
for $\mathcal{H}$ and $S_{F_{i}}$ is the frame operator of local
frames $F_{i}$ for each $i\in I$. Now the question is whether the
canonical dual of each frame $F_{i}$ is also a frame for the
canonical dual of $\{(W_{i},\omega_{i})\}_{i\in I}$. The following
example shows that the answer is not true in general.
{{\begin{ex}\label{exam in R3} Let $I=\{1, 2, 3, 4\}.$
Consider
\begin{equation*}
W_{1}=span\{(1, 0, 0)\},\ \ W_{2}=span\{(1, 1, 0)\},
\end{equation*}
\begin{equation*}
 W_{3}=span\{(0,
1, 0)\},\ \ W_{4}=span\{(0, 0, 1)\},
\end{equation*}
and $w_{1}=w_{3}=w_{4}=1, w_{2}=\sqrt{2}.$ Then by Example 3.1 in
\cite{Ami12}, $\{(W_{i},\omega_{i})\}_{i\in I}$ is a fusion frame
for $\mathbb{R}^{3}$. Let $f_{1, 1}=(1, 0, 0), f_{2, 1}=(1, 1, 0),
f_{3, 1}=(0, 1, 0)$ and $f_{4, 1}=(0, 0, 1)$. It is clear that
$\{f_{i, 1}\}$ is a frame for $W_{i}$ for each $i\in I$. Suppose
that $S_{W}$ is the frame operator of  $\{(W_{i},
\omega_{i})\}_{i\in I}$ and $S_{F_{i}}$ is the frame operator of
$\{f_{i, 1}\}$ for each $i\in I$. A straightforward calculation
shows that
\[S_{W}=\begin{bmatrix}
2&1&0\\
1&2&0\\
0&0&1\\
\end{bmatrix}\]
and the subspaces
\begin{equation*}
S_{W}^{-1}W_{1}=span\{(\frac{2}{3}, \frac{-1}{3}, 0)\},\ \
S_{W}^{-1}W_{2}=span\{(\frac{1}{3}, \frac{1}{3}, 0)\},
\end{equation*}
\begin{equation*}
S_{W}^{-1}W_{3}=span\{(\frac{-1}{3}, \frac{2}{3}, 0)\},\ \
S_{W}^{-1}W_{4}=span\{(0, 0, 1)\},
\end{equation*}
with the weights $\{\omega_i\}_{i\in I}$ is the canonical dual of
$\{(W_{i},w_{i})\}_{i\in I}$.  Moreover, if we take
\begin{equation*}
g_{1, 1}=(1, 0, 0),\ g_{2, 1}=(\frac{1}{2}, \frac{1}{2}, 0),\
g_{3, 1}=(0, 1, 0),\ g_{4, 1}=(0, 0, 1),
\end{equation*}
then $\{g_{i, 1}\}$ is the canonical dual of $\{f_{i, 1}\}$ for
each $i\in I$. However, $\{g_{i, 1}\}$ is not a frame for
$S_{W}^{-1}W_{i}$ for each $i\in I$.
\end{ex}}}
The following example shows that there is no significant relation
between the duals of fusion frames and their associated discrete
frames, i.e. if $\{(V_{i}, \nu_{i})\}_{i\in I}$ is a dual of
$\{(W_{i}, \omega_{i})\}_{i\in I}$, then it is not necessary that
their associated discrete frames be dual of each other.
 {{\begin{ex} Let
\begin{equation*}
W_{1}=span\{(1, 0, 0)\},\ \ W_{2}=span\{(0, 1, 0)\},\ \
W_{3}=span\{(0, 0, 1)\},
\end{equation*}
\begin{equation*}
W_{4}=span\{(0, 1, 0)\},\ \ W_{5}=span\{(1, 0, 0)\},\ \
W_{6}=span\{(0, 0, 1)\},
\end{equation*}
and
\begin{equation*}
V_{1}=span\{(1, 0, 0)\},\ \ V_{2}=span\{(0, 1, 0)\},\ \
V_{3}=span\{(0, 0, 1)\},
\end{equation*}
\begin{equation*}
V_{4}=span\{(0, 0, 1)\},\ \ V_{5}=span\{(0, 1, 0)\},\ \
V_{6}=span\{(1, 0, 0)\}.
\end{equation*}
Then $\{(W_{i}, 1)\}_{i=1}^{6}$ is a fusion frame for
$\mathbb{R}^{3}$ with an alternate dual $\{(V_{i},
2)\}_{i=1}^{6}$.
\\Consider
\begin{equation*}
f_{1, 1}=f_{5, 1}=(1, 0, 0), \ \ f_{2, 1}=f_{4, 1}=(0, 1, 0),\ \
f_{3, 1}=f_{6, 1}=(0, 0, 1),
\end{equation*}
and
\begin{equation*}
g_{1, 1}=g_{6, 1}=(2, 0, 0),\ \  g_{2, 1}=g_{5, 1}=(0, 2, 0),\ \
g_{3, 1}=g_{4, 1}=(0, 0, 2).
\end{equation*}
Then $\{f_{i, 1}\}_{i=1}^{6}$ and $\{g_{i, 1}\}_{i=1}^{6}$ are
frames for $\mathbb{R}^{3}$, but they are not dual of each other.
\end{ex}}}
In the following proposition we give a necessary condition to
elucidate duals of fusion frames.
\begin{prop}
Let $\{(W_{i},1)\}_{i\in I}$ be a fusion frame for $\mathcal{H}$
and $\{(V_{i},1)\}_{i\in I}$ be a Parseval fusion frame for
$\mathcal{H}$. Suppose that $W_{k}\perp S_{W}^{-1}V_{i}$ for each
$i\neq k$. Then $\{(V_{i},1)\}_{i\in I}$ is an alternative dual of
$\{(W_{i},1)\}_{i\in I}$.
\end{prop}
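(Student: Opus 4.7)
The plan is to reduce the desired identity
$$f = \sum_{i \in I} \pi_{V_i}\,S_W^{-1}\,\pi_{W_i} f$$
to two basic ingredients: the canonical reconstruction formula for $\{(W_i,1)\}_{i\in I}$ and the Parseval property $\sum_{i\in I}\pi_{V_i} = I_{\mathcal{H}}$, with the orthogonality hypothesis serving to eliminate all off-diagonal cross terms.

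First, I would rewrite the hypothesis in a more useful form. Since $S_W$ (and hence $S_W^{-1}$) is self-adjoint, the condition $W_k \perp S_W^{-1} V_i$ for $i\neq k$ is equivalent to $S_W^{-1} W_k \perp V_i$ for $i\neq k$; in operator terms, $\pi_{V_i}\,S_W^{-1}\,\pi_{W_k} = 0$ whenever $i\neq k$.

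Next, I would invoke the canonical reconstruction formula for the fusion frame $\{(W_i,1)\}_{i\in I}$, namely
$$f = \sum_{k \in I} S_W^{-1}\,\pi_{W_k} f, \qquad f\in\mathcal{H}.$$
Applying $\pi_{V_i}$ to both sides and using the orthogonality just established, only the $k=i$ term survives, so
$$\pi_{V_i} f = \pi_{V_i}\,S_W^{-1}\,\pi_{W_i} f, \qquad f\in\mathcal{H},\ i\in I.$$
Summing over $i$ and invoking the Parseval hypothesis on $\{(V_i,1)\}_{i\in I}$, which gives $S_V = I_{\mathcal{H}}$ and therefore $\sum_{i\in I}\pi_{V_i} f = f$, yields
$$f = \sum_{i\in I}\pi_{V_i}\,S_W^{-1}\,\pi_{W_i} f,$$
which is exactly the duality condition \eqref{Def:alt} with $\omega_i = \nu_i = 1$.

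I do not expect any real obstacle here: the Parseval assumption is precisely what is needed to reassemble $f$ after the projections $\pi_{V_i}$, and the orthogonality hypothesis is tailored to cut the double sum down to its diagonal. The one point that merits care is the unconditional convergence of the rearrangements used when summing $\pi_{V_i} f$ and $\sum_k S_W^{-1}\pi_{W_k}f$ simultaneously; this is ensured by the Bessel property of $\{(V_i,1)\}_{i\in I}$ together with $\{S_W^{-1}\pi_{W_k}f\}_{k\in I}\in (\sum_{k\in I}\oplus S_W^{-1}W_k)_{\ell^2}$, so the interchange of $\pi_{V_i}$ with the sum over $k$ is justified.
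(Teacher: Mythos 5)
Your proof is correct and follows essentially the same route as the paper: both arguments combine the reconstruction formula $f=\sum_k S_W^{-1}\pi_{W_k}f$ with the Parseval identity $\sum_i\pi_{V_i}f=f$ and use the orthogonality hypothesis to kill the off-diagonal terms $\pi_{V_i}S_W^{-1}\pi_{W_k}f$, $k\neq i$. The only (cosmetic) difference is that you justify $\pi_{V_i}S_W^{-1}\pi_{W_k}=0$ directly from the self-adjointness of $S_W^{-1}$, whereas the paper routes this through Lemma \ref{har}.
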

\begin{proof}
Since $\{(V_{i},1)\}_{i\in I}$ is a Parseval fusion frame, we have
\begin{eqnarray}\label{2}
\nonumber f&=&S_{V}f\\\nonumber&=&\sum_{i\in
I}\pi_{V_{i}}(S_{W}^{-1}S_{W}f)\\&=&\sum_{i\in
I}\pi_{V_{i}}S_{W}^{-1}\sum_{k\in
I}\pi_{W_{k}}f\\\nonumber&=&\sum_{i\in
I}\pi_{V_{i}}S_{W}^{-1}\pi_{W_{i}}f+\sum_{i\in
I}\pi_{V_{i}}S_{W}^{-1}(\sum_{k\in I, k\neq i}\pi_{W_{k}}f),\ \
f\in\mathcal{H}.
\end{eqnarray}
By Lemma \ref{har}, we have
\begin{equation}\label{si}
\sum_{i\in I}\sum_{k\in I, k\neq
i}\pi_{V_{i}}S_{W}^{-1}\pi_{W_{k}}f=\sum_{i\in I}\sum_{k\in I,
k\neq i}\pi_{V_{i}}S_{W}^{-1}\pi_{S_{W}^{-1}V_{i}}\pi_{W_{k}}f=0.
\end{equation}
So we get the proof by (\ref{2}) and (\ref{si}).
\end{proof}
\begin{prop}
Let $\{(W_{i},\omega_i)\}_{i\in I}$ and $\{(V_{i},\nu_i)\}_{i\in
I}$ be fusion frames for $\mathcal{H}$. Suppose that $W_{i}\perp
V_{i}$ for each $i\in I$. Then $\{(S_{W}V_{i},\nu_i)\}_{i\in I}$
can not be an alternative dual of $\{(W_{i},\omega_i)\}_{i\in I}$.
\end{prop}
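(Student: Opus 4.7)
The plan is to assume for contradiction that $\{(S_W V_i,\nu_i)\}_{i\in I}$ is an alternative dual of $\{(W_i,\omega_i)\}_{i\in I}$ and show that, under the orthogonality hypothesis $W_i\perp V_i$, every single term in the reconstruction sum is already zero, which forces $f=0$ for all $f\in\mathcal{H}$, a contradiction.

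First, I would unwind the definition of a dual fusion frame: being a dual means
\begin{equation*}
f=\sum_{i\in I}\omega_i\nu_i\,\pi_{S_W V_i}S_W^{-1}\pi_{W_i}f,\qquad f\in\mathcal{H}.
\end{equation*}
The strategy is therefore to analyze the single operator $\pi_{S_W V_i}S_W^{-1}\pi_{W_i}$ and show it vanishes. To do this I apply Lemma \ref{har} with $T=S_W^{-1}\in B(\mathcal{H})$ and the closed subspace $S_W V_i\subseteq\mathcal{H}$. Since $S_W$ (hence $S_W^{-1}$) is self-adjoint, $T^{*}=S_W^{-1}$, and $T(S_W V_i)=S_W^{-1}(S_W V_i)=V_i$, so the lemma yields
\begin{equation*}
\pi_{S_W V_i}S_W^{-1}=\pi_{S_W V_i}S_W^{-1}\pi_{V_i}.
\end{equation*}

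Next I use the orthogonality hypothesis: $W_i\perp V_i$ means $W_i\subseteq V_i^{\perp}$, so for every $g\in\mathcal{H}$ we have $\pi_{W_i}g\in W_i\subseteq V_i^{\perp}$, hence $\pi_{V_i}\pi_{W_i}=0$. Combining this with the previous display gives
\begin{equation*}
\pi_{S_W V_i}S_W^{-1}\pi_{W_i}=\pi_{S_W V_i}S_W^{-1}\pi_{V_i}\pi_{W_i}=0
\end{equation*}
for every $i\in I$. Consequently the right-hand side of the dual reconstruction formula is identically zero, while the left-hand side $f$ ranges over all of $\mathcal{H}$. This contradiction closes the proof.

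I do not expect a significant obstacle here: the whole argument is essentially a single invocation of Lemma \ref{har}. The only mild points are that $S_W^{-1}$ be self-adjoint (immediate, since $S_W$ is bounded, invertible and positive), and that $S_W V_i$ be closed (which is automatic because $S_W$ is an invertible bounded operator and $V_i$ is closed).
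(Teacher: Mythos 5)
Your proposal is correct and follows essentially the same route as the paper: both apply Lemma \ref{har} with $T=S_W^{-1}$ and the closed subspace $S_WV_i$ to get $\pi_{S_WV_i}S_W^{-1}=\pi_{S_WV_i}S_W^{-1}\pi_{V_i}$, and then use $\pi_{V_i}\pi_{W_i}=0$ from the orthogonality hypothesis to kill every term of the reconstruction sum. Your write-up actually spells out the choice of $T$ and the closedness of $S_WV_i$ more explicitly than the paper does, which is a welcome clarification rather than a deviation.
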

\begin{proof}
By Proposition 1.1 in \cite{Cas04}, $\{( S_{W}V_{i},\nu_i)\}_{i\in
I}$ is a fusion frame for $\mathcal{H}$. By Lemma \ref{har} we
have
\begin{eqnarray*}
\sum_{i\in
I}\omega_{i}\nu_{i}\pi_{S_{W}V_{i}}S_{W}^{-1}\pi_{W_{i}}f&=&\sum_{i\in
I}\omega_{i}\nu_{i}\pi_{S_{W}V_{i}}S_{W}^{-1}\pi_{V_{i}}\pi_{W_{i}}f=0,
\ \ f\in\mathcal{H}.
\end{eqnarray*}
\end{proof}
In the rest of the paper we try to characterize the duals of
fusion frames. We first discuss the Riesz case. Let
$\{(W_i,\omega_i)\}_{i\in I}$ be a Riesz decomposition of
$\mathcal{H}$ and $\{(V_i,\nu_i)\}_{i\in I}$ be its dual.
Associated to the canonical dual
$\{(S_{W}^{-1}W_i,\omega_i)\}_{i\in I}$ we can consider the
operator $\phi_1:(\sum_{i\in I}\oplus W_i)_{\ell^{2}}\rightarrow
(\sum_{i\in I}\oplus S_{W}^{-1}W_i)_{\ell^{2}}$ given by
\begin{eqnarray*}
\quad\phi_1(\{f_i\}_{i\in
I})=\{\pi_{S_{W}^{-1}W_i}S_{W}^{-1}f_i\}_{i\in I}.
\end{eqnarray*}
Applying (\ref{u.phi.T*}) and Theorem \ref{rie} we conclude that
$T_{V}\phi=T_{S_{W}^{-1}W}\phi_1$, where $T_{S_{W}^{-1}W}$ is the
synthesis operator of $\{(S_{W}^{-1}W_{i}, \omega_{i})\}_{i\in
I}$. It follows easily that
\begin{eqnarray*}
\pi_{V_i}S_{W}^{-1}f_i=S_{W}^{-1}f_i, \quad i\in I,f_i\in W_i,
\end{eqnarray*}
or equivalently
\begin{eqnarray*}
S_{W}^{-1}W_i\subseteq V_i, \quad i\in I.\end{eqnarray*}The
following example shows that unfortunately, we can not
characterize the duals of fusion frames by the duals of their
associated discrete frames and the first part of Proposition
\ref{ar}.
{{\begin{ex} Consider the fusion frame
$\{(W_i,\omega_i)\}_{i=1}^4$ introduced in Example \ref{exam in
R3}. By Theorem \ref{relation fus.dis} the sequence
\begin{equation*}
\{\omega_if_{i,1}\}_{i=1}^4=\{(1, 0, 0),\sqrt{2}(1, 1, 0),(0, 1,
0),(0, 0, 1)\}\end{equation*} is frame for $\mathbb{R}^3$ with the
frame operator
\[S_F=\begin{bmatrix}
3&2&0\\
2&3&0\\
0&0&1\\
\end{bmatrix}.\]
Denote its canonical dual by $\{g_{i,1}\}_{i=1}^4$. Then
\begin{equation*}
\{g_{i,1}\}_{i=1}^4=\{\frac{1}{5}(3,-2, 0),\frac{\sqrt{2}}{5}(1,
1, 0),\frac{1}{5}(-2,3, 0),(0, 0, 1)\}.\end{equation*} Consider
\begin{equation*}
V_{1}=span\{(3, -2, 0)\}\ \ V_{2}=span\{(1, 1, 0)\},
\end{equation*}
\begin{equation*}
V_{3}=span\{(-2, 3, 0)\},\ \ V_{4}=span\{(0, 0, 1)\},
\end{equation*}
and $\nu_{1}=\nu_{3}=\frac{1}{5},\ \ \nu_{2}=\frac{\sqrt{2}}{5},\
\ \nu_{4}=1$, then $\{(V_{i}, \nu_{i})\}_{i=1}^{4}$ is a fusion
frame for $\mathbb{R}^{3}$. But $\{(V_{i}, \nu_{i})\}_{i=1}^{4}$
is not an alternative dual of $\{(W_{i}, \omega_{i})\}_{i=1}^{4}$
and vise-versa.
\end{ex}}
We give an explicit construction of a dual fusion frame in the
following theorem.
\begin{thm} Let $\{(W_i, 1)\}_{i\in I}$ be a fusion frame for $\mathcal{H}$ and $\{h_i\}_{i\in I}$ be a Bessel sequence of normalized vectors such that
$h_i\in (S_W^{-1}W_i)^{\perp}$. Take
\begin{eqnarray*}
V_i=S_W^{-1}W_i + Z_i,\quad i\in I,
\end{eqnarray*}
where $Z_i$ is the 1-dimensional subspace generated by $h_i$. Then
$\{(V_i, 1)\}_{i\in I}$ is a dual for $\{(W_i, 1)\}_{i\in I}$.
\end{thm}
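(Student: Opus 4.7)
The plan is to exploit the orthogonality $h_i \perp S_W^{-1}W_i$ to reduce the problem to the canonical dual reconstruction formula. Since the two summands in $V_i = S_W^{-1}W_i + Z_i$ are orthogonal, the sum is actually an orthogonal direct sum $V_i = S_W^{-1}W_i \oplus Z_i$, and the projection splits as
\begin{equation*}
\pi_{V_i} = \pi_{S_W^{-1}W_i} + \pi_{Z_i},\qquad i\in I.
\end{equation*}
This identity is the engine driving everything that follows.

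First I would verify that $\{(V_i,1)\}_{i\in I}$ is a Bessel fusion sequence, which is required in order to even speak of it being a dual. Using the orthogonal splitting of $\pi_{V_i}$, Pythagoras gives
\begin{equation*}
\|\pi_{V_i}f\|^{2} = \|\pi_{S_W^{-1}W_i}f\|^{2} + |\langle f,h_i\rangle|^{2},\qquad f\in\mathcal{H}.
\end{equation*}
Summing over $i$, the first term is controlled by the upper fusion frame bound of the canonical dual $\{(S_W^{-1}W_i,1)\}_{i\in I}$, while the second is controlled by the Bessel bound of $\{h_i\}_{i\in I}$. Hence $\sum_{i\in I}\|\pi_{V_i}f\|^{2}\leq B\|f\|^{2}$ for some $B>0$.

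Next I would establish the dual reconstruction identity \eqref{Def:alt} with $\omega_i=\nu_i=1$ and $S_W$ in place of itself. Fix $f\in\mathcal{H}$ and $i\in I$. Since $S_W^{-1}\pi_{W_i}f\in S_W^{-1}W_i$, the splitting of $\pi_{V_i}$ yields
\begin{equation*}
\pi_{V_i}S_W^{-1}\pi_{W_i}f = \pi_{S_W^{-1}W_i}S_W^{-1}\pi_{W_i}f + \pi_{Z_i}S_W^{-1}\pi_{W_i}f = S_W^{-1}\pi_{W_i}f + 0,
\end{equation*}
where the second term vanishes because $h_i\perp S_W^{-1}W_i$ and therefore $h_i\perp S_W^{-1}\pi_{W_i}f$. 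Summing over $i$ and invoking the reconstruction formula for the canonical dual gives
\begin{equation*}
\sum_{i\in I}\pi_{V_i}S_W^{-1}\pi_{W_i}f = \sum_{i\in I}S_W^{-1}\pi_{W_i}f = S_W^{-1}S_Wf = f.
\end{equation*}

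There is no real obstacle in this argument: once the orthogonal decomposition of $V_i$ is observed, the contribution of $Z_i$ disappears against $S_W^{-1}\pi_{W_i}f$ and one lands exactly on the canonical dual expansion. The only point requiring a little care is the Bessel bound for $\{(V_i,1)\}_{i\in I}$, which is precisely why the hypothesis that $\{h_i\}_{i\in I}$ be Bessel (rather than arbitrary) is imposed in the statement.
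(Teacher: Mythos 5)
Your proof is correct and follows essentially the same route as the paper's: split $\pi_{V_i}=\pi_{S_W^{-1}W_i}+\pi_{Z_i}$ via the orthogonality of $h_i$ to $S_W^{-1}W_i$, bound $\sum_{i\in I}\|\pi_{V_i}f\|^{2}$ using the canonical dual's upper fusion bound together with the Bessel bound of $\{h_i\}_{i\in I}$, and note that the $Z_i$ contribution to the reconstruction sum vanishes since $S_W^{-1}\pi_{W_i}f\perp h_i$. Your Pythagoras identity even gives a slightly cleaner Bessel constant than the paper's cross-term estimate, but the substance of the argument is identical.
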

\begin{proof} First, it is not difficult to see that
\begin{eqnarray*}
\pi_{Z_i}f=\langle f,h_i\rangle h_i,\quad i\in I, f\in
\mathcal{H}.
\end{eqnarray*}
Now by using 8.12 of \cite{Gre81} and Corollary 2.5 of
\cite{Gav07} we conclude that
\begin{eqnarray*}
\sum_{i\in I}\|\pi_{V_i}f\|^2&=&\sum_{i\in I}\|\pi_{S_W^{-1}W_i}f+\pi_{Z_i}f\|^2\\
&\leq &\sum_{i\in I}\|\pi_{S_W^{-1}W_i}f\|^2 + \sum_{i\in I}|\langle f,h_i\rangle|^2\\&+& 2\left(\sum_{i\in I}\|\pi_{S_W^{-1}W_i}f\|^2\right)^{\frac{1}{2}}\left(\sum_{i\in I}|\langle f,h_i\rangle|^2\right)^{\frac{1}{2}}\\
&\leq&\left(B\|S_W\|^2\|S_W^{-1}\|^2+D+2\sqrt{BD}\|S_W\|\|S_W^{-1}\|\right)\|f\|^2,
\end{eqnarray*}
where $B$ and $D$ are the frame bounds of $\{(W_i, 1)\}_{i\in I}$
and $\{h_i\}_{i\in I}$, respectively. Hence, $\{(V_i, 1)\}_{i\in
I}$ is a Bessel fusion frame. Moreover, by Lemma \ref{har} we have
\begin{eqnarray*}
\sum_{i\in I}\pi_{V_i}S_{W}^{-1}\pi_{W_i}f&=&\sum_{i\in I}\pi_{S_W^{-1}W_i}S_{W}^{-1}\pi_{W_i}f+\sum_{i\in I}\pi_{Z_i}S_{W}^{-1}\pi_{W_i}f\\
&=& f,\ \ \ \ f\in\mathcal{H}.
\end{eqnarray*}
\end{proof}
\begin{rem}  The above theorem gives us a very simple method to construct duals of finite fusion frames.
More precisely, let $\{(W_i, 1)\}_{i\in I}$ be a finite fusion
frame for $\mathcal{H}$. Take
\begin{eqnarray*}
V_i=\left\{\begin{array}{cc}
                        S_W^{-1}W_i &\mbox{for $(S_W^{-1}W_i)^{\perp}= \emptyset$ }\\
                       S_W^{-1}W_i\oplus Z_i&\mbox{for $(S_W^{-1}W_i)^{\perp}\neq \emptyset$},\\
                     \end{array}\right. \end{eqnarray*}
where $Z_i$ is a 1-dimensional subspace of
$(S_W^{-1}W_i)^{\perp}$. To illustrate this algorithm, let us
consider the fusion frame $\{W_i\}_{i\in I}$ in Example \ref{dual
riesz}. Clearly
\begin{equation*}
\left(S^{-1}_WW_{1}\right)^{\perp}=span\{(1, 1, 0),(0, 0, 1)\},
\end{equation*}
\begin{equation*}
 \left(S^{-1}_WW_{2}\right)^{\perp}=span\{(1, 0,
0), (0, 0, 1)\},
\end{equation*}
\begin{equation*}
 \left(S^{-1}_WW_{3}\right)^{\perp}=span\{(1, 0, 0), (0, 1, 0)\}.
\end{equation*}
Therefore, we can introduce some duals:
\begin{equation*}
(i)\quad V_{1}=span\{(1, -1, 0), (0 , 0, 1)\},\ \
V_{2}=\mathbb{R}^2\times \{0\},\ \ V_{3}=\{0\}\times\mathbb{R}^2.
\end{equation*}
\begin{equation*}
(ii)\quad V_{1}=span\{(1, -1, 0), (0 , 0, 1)\},\ \
V_{2}=\{0\}\times\mathbb{R}^2,\ \ V_{3}=\{0\}\times\mathbb{R}^2.
\end{equation*}
\begin{equation*}
(iii)\quad V_{1}=span\{(1, -1, 0), (1 , 1, 0)\},\ \
V_{2}=\mathbb{R}^2\times\{0\} ,\ \ V_{3}=\{0\}\times\mathbb{R}^2.
\end{equation*}
\begin{equation*}
(iv)\quad V_{1}=span\{(1, -1, 0), (1 , 1, 0)\},\ \
V_{2}=\{0\}\times\mathbb{R}^2 ,\ \ V_{3}=\{0\}\times\mathbb{R}^2.
\end{equation*}
\end{rem}}

\bibliographystyle{amsplain}

\begin{thebibliography}{10}

\bibitem{Ami12}
Z. Amiri, M. A. Dehghan, E. Rahimi, Subfusion frames, Abstr. Appl.
Anal., {\bf 2012} (2012), 1-12.

\bibitem{Ar12}
A. Arefijamaal, S. Ghasemi, On characterization and stability of
alternate dual of g-frames, Turk. J. Math., {\bf 37} (2013), 71-79.

\bibitem{Ar13}
A. Arefijamaal, E. Zekaee, Signal processing by alternate dual Gabor
frames, Appl. Comput. Harmon. Anal., {\bf 35} (2013), 535-540.

\bibitem{CA}
R. Calderbank, P. G. Casazza, A. Heinecke, G. Kutyniok, A. Pezeshki,
Sparse fusion frames: existence and construction, Adv. Comput.
Math., {\bf 35} (1) (2011), 1-31.








\bibitem{Cas09} P. G. Casazza, M. Fickus, Minimizing fusion frame
potential, Acta, Appl. Math., {\bf 107} (2009), 7-24.

\bibitem{Cas04} P. G. Casazza, G. Kutyniok, Frames of subspaces,
Contemp. Math., vol. 345, Amer. Math. Soc., Providence, RI, 2004,
pp. 87-113.

\bibitem{Cas08} P. G. Casazza, G. Kutyniok, S. Li, Fusion frames and distributed processing, Appl. Comput. Harmon. Anal.,
{\bf 25} (1) (2008), 114-132.

\bibitem{Chr08} O. Christensen, Frames and Bases: An Introductory Course,
Birkh\"{a}user, Boston, 2008.

\bibitem{Ole01} O. Christensen, E. Osgooei, On frame properties for Fourier-like
systems, J. Approx. Theory, {\bf 172} (2013), 47-57.










\bibitem {Deh} M. A. Dehghan, M. A. Hasankhani Fard, G-dual frames in Hilbert
spaces, U. P. B. Sci. Bull. Series A, {\bf 75} (1) (2013), 129-140.

\bibitem {Gav07} P. Gavruta, On the duality of fusion frames, J. Math. Anal.
Appl., {\bf 333} (2) (2007), 871-879.

\bibitem{Gre81} W. H. Greub, Linear Algebra, Springer-Verlag, New York, 1981.

\bibitem{Ka94}
S. K. Kaushik, A generalization of frames in Banach spaces, J.
Contemp. Math. Anal., {\bf 44} (4) (2009), 212-218.

\bibitem{Leng1}
J. Leng, Q. Guo, T. Huang, The duals of fusion frames for
experimental data transmission coding of high energy physics, Adv.
High Energy Phys., {\bf 2013} (2013), 1-9.

\bibitem{Mas10} P. G. Massey, M. A. Ruiz, D. Stojanoff, The structure of
minimizers of the frame potential on fusion frames, J. Fourier Anal.
Appl., {\bf 16} (2010), 514-543.

\bibitem {naj1}
A. Najati, A. Rahimi, M. H. Faroughi, Continuous and discrete frames
of subspaces in Hilbert spaces, Southeast Asian Bull. Math., {\bf
32} (2008), 305-324.

\bibitem{Eli1}
E. Osgooei, M. H. Faroughi, Hilbert-Schmidt sequences and dual of
g-frames, Acta Univ. Apulensis, {\bf 36} (2013), 1-15.











\end{thebibliography}

\end{document}